\DeclareMathOperator{\Pic}{Pic}
\DeclareMathOperator{\co}{covol}
\newcommand\suchthat{%
 \@ifstar
  {\mathrel{}\middle|\mathrel{}}
  {\mid}%
}
\theoremstyle{plain}
 \newtheorem{theorem}{Theorem}[section]
 \newtheorem{proposition}{Proposition}[section]
 \newtheorem{lemma}{Lemma}[section]
 \newtheorem{corollary}{Corollary}[section]
\theoremstyle{definition}
\theoremstyle{remark}
\newtheorem{remark}{Remark}[section] 
\numberwithin{equation}{section}
 \newtheorem{definition}{Definition}[section]
\title[REDUCED IDEALS FROM THE REDUCTION ALGORITHM]{REDUCED IDEALS FROM THE REDUCTION ALGORITHM}
\author[Ha Thanh Nguyen Tran]{Ha Thanh Nguyen Tran} 
\address{Department of Mathematics and Statistics,
University of Calgary, Canada.}
\email{hatran1104@gmail.com}
\author[Duong Hoang Dung]{Duong Hoang Dung} 
\address{ School of Computing and Information Technology,
	University of Wollongong
	NSW, Australia,  2522.}
\email{hduong@uow.edu.au}
\keywords{reduced ideal; reduction algorithm; LLL reduced basis.}
\subjclass[2010]{11Y40, 11R11, 11H06;  }
\begin{document}

\maketitle

\begin{abstract}\textbf{}
The reduction algorithm is used to compute reduced ideals of a number field. However, there are reduced ideals that can never be obtained from this algorithm. In this paper, we will show that these ideals have inverses of larger norms among reduced ones. Especially, 	we represent a sufficient and necessary condition for reduced ideals of real quadratic fields to be obtained from the reduction algorithm. 
\end{abstract}


\section{Introduction}
Reduced ideals of a number field $F$ have inverses of small norms and they form a finite and regularly distributed set in the infrastructure of $F$. Therefore, they can be used to compute the regulator and the class number of a number field \cite{ref:7,ref:5,ref:8,ref:10,ref:9,ref:4}. One usually applies the reduction algorithm (see Algorithm 10.3 in \cite{ref:4}) to find them. Ideals obtained from this algorithm are called 1-reduced \cite{Tran1}. 
There exist reduced ideals that are not 1-reduced. 
For example, the real quadratic field $F= \mathbb{Q}(\sqrt{73})$ has nine reduced ideals but only seven of them are 1-reduced. The ideals $D_2$ and $D_9$ are reduced but not 1-reduced. 

\begin{figure}[!htb]
	\centering
	\begin{minipage}{.5\textwidth}
		\centering
		\includegraphics[width=0.6\linewidth]{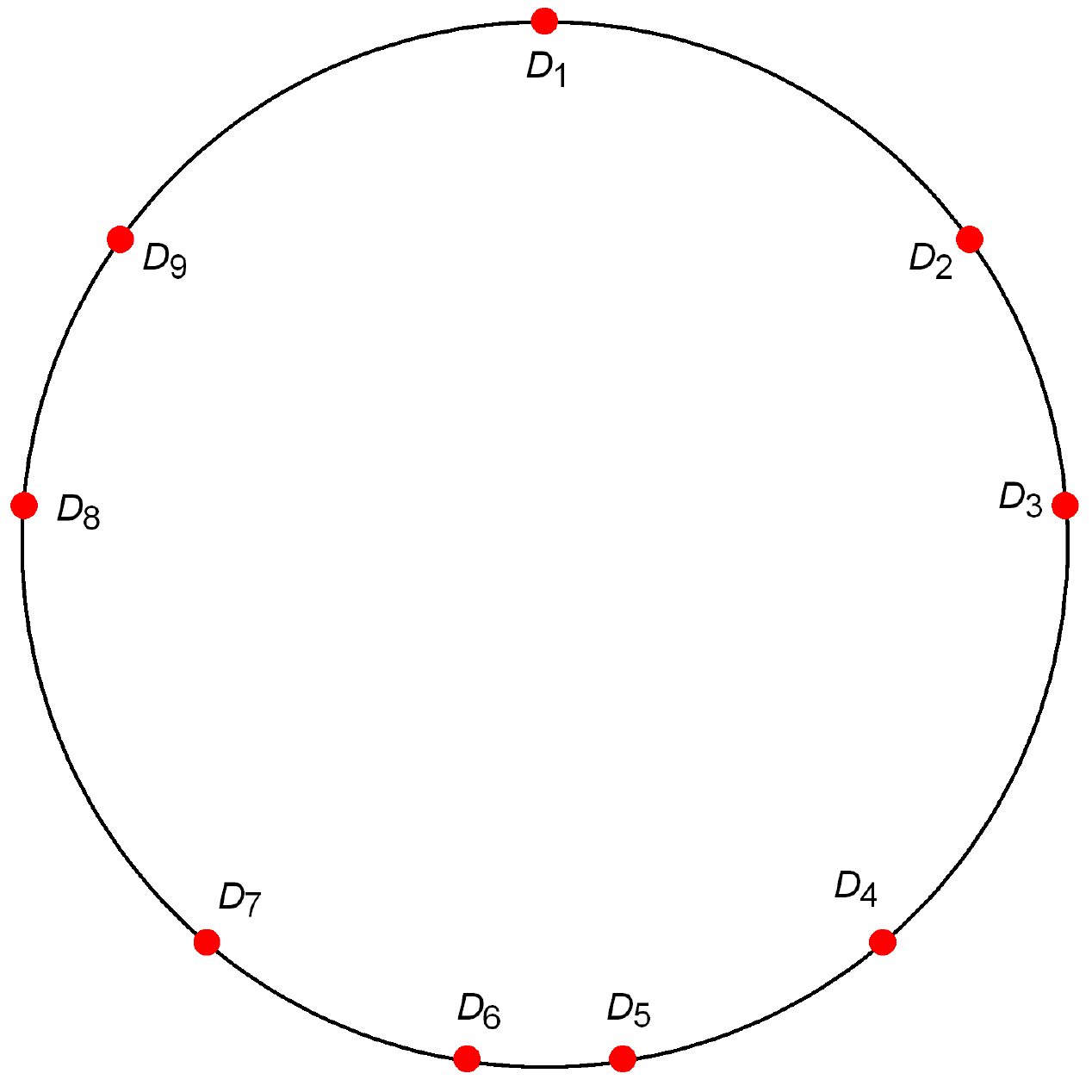}
		\caption{The reduced ideals of $F= \mathbb{Q}(\sqrt{73})$. \label{pic:red_disc73}}
	\end{minipage}%
	\begin{minipage}{0.5\textwidth}
		\centering
		\includegraphics[width=0.6\linewidth]{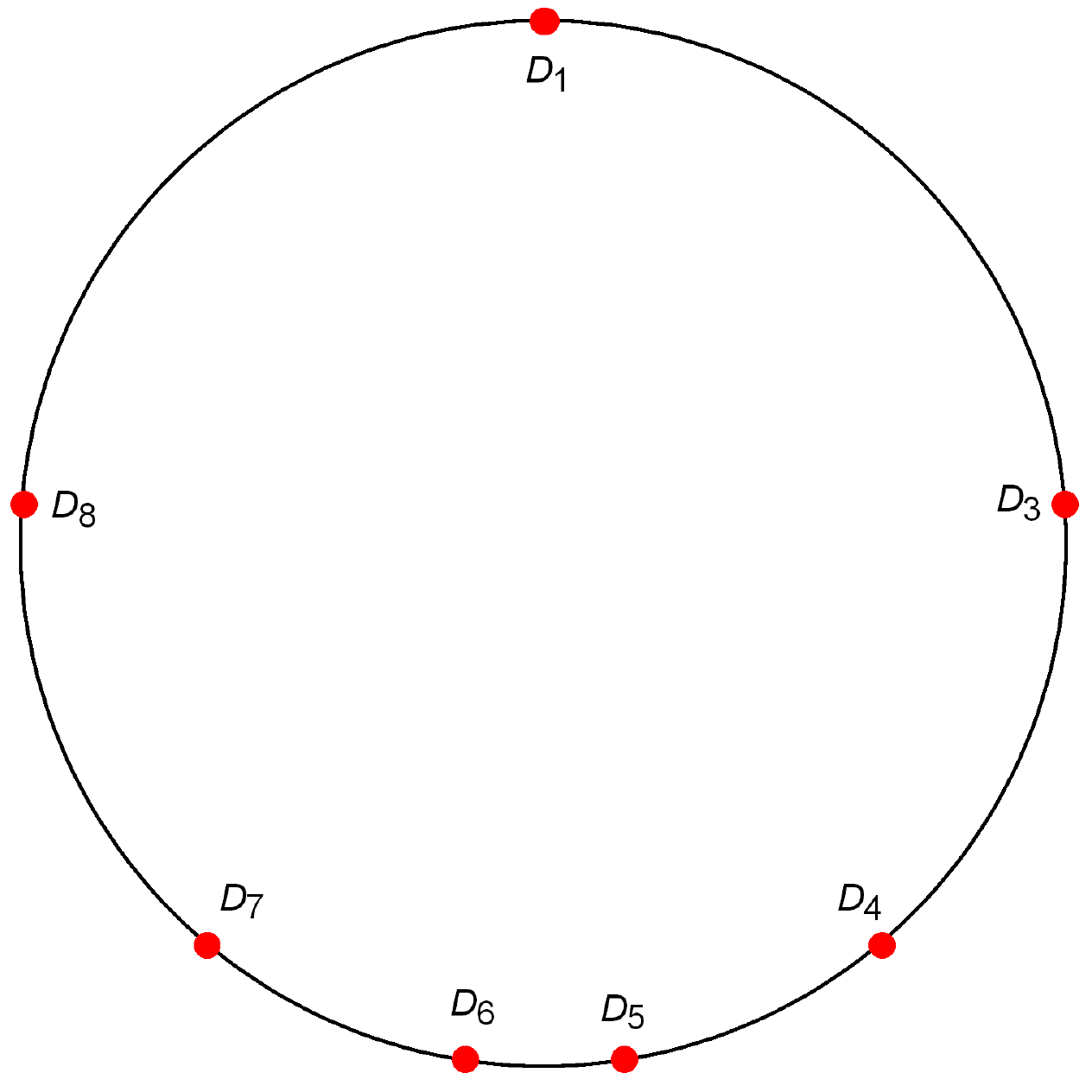}
		\caption{The 1-reduced ideals of $F= \mathbb{Q}(\sqrt{73})$. \label{pic:1red_disc73}}
	\end{minipage}
\end{figure}

In this paper, we first show that for an arbitrary number field, the inverses of $1$-reduced ideals must have small norms compared to the discriminant of $F$. The result is represented in Section \ref{sec4}.

In case of real quadratic fields, we prove a sufficient and necessary condition for a reduced ideal to be 1-reduced. Explicitly, each reduced ideal contains a unique element $f$ satisfying the conditions given in Remark \ref{remrealquad}. We will prove the following theorem.
\begin{theorem}\label{thmrealquad}
	Let $F$ be a real quadratic field and let $I= \mathbb{Z} + \mathbb{Z} f$ be a reduced fractional ideal of $F$ where $f$ is in Remark \ref{remrealquad}.
	Then $I$ is 1-reduced if $N(f-1/2) \le -3/4$.
	
\end{theorem}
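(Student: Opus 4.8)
The plan is to translate the statement ``$I$ is $1$-reduced'' into an explicit quadratic inequality in the two coordinates of $f$, and then to check that this inequality follows from the hypothesis. First I would recall that $I$ is $1$-reduced precisely when the reduction algorithm (Algorithm~10.3 of \cite{ref:4}) halts on $I$, and that for a real quadratic field this algorithm operates on an LLL-reduced basis, with Lov\'asz parameter $\tfrac34$, of the lattice $I$ under the embedding $\alpha\mapsto(\alpha,\alpha')$ of $F$ into $\mathbb{R}^2$, for which we write $T_2(\alpha)=\alpha^2+\alpha'^2$. Let $k$ be the integer nearest to $\Tr(f)/2$ and set $g:=f-k\in I$; this is the size-reduction of $f$ against $1$ (with the normalisation of Remark~\ref{remrealquad} one may already have $g=f$). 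Since $(1,g)$ is then size-reduced, it is LLL-reduced with $1$ as its first vector if and only if the Lov\'asz condition $T_2(g)\ge\tfrac34\,T_2(1)=\tfrac32$ holds; in that case the reduction step, which multiplies by the inverse of the first basis vector, does nothing, so the algorithm terminates and $I$ is $1$-reduced. Thus it is enough to prove that $N(f-\tfrac12)\le-\tfrac34$ implies $g^2+g'^2\ge\tfrac32$.

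The remaining step is a short computation. Expanding the norm gives $N(f-\tfrac12)=N(f)-\tfrac12\Tr(f)+\tfrac14$, so the hypothesis is equivalent to $N(f)\le\tfrac12\Tr(f)-1$. On the other hand
\[
 g^2+g'^2=\Tr(g)^2-2N(g)=\bigl(\Tr(f)-2k\bigr)^2-2\bigl(N(f)-k\,\Tr(f)+k^2\bigr),
\]
and substituting the bound on $N(f)$ yields $g^2+g'^2\ge \Tr(f)^2-(2k+1)\Tr(f)+2k^2+2$. Regarded as a quadratic in $\Tr(f)$ with leading coefficient $1$, the right-hand side minus $\tfrac32$ has discriminant $(2k+1)^2-4\bigl(2k^2+\tfrac12\bigr)=-(2k-1)^2\le0$, hence is nonnegative, so $g^2+g'^2\ge\tfrac32$, as needed. (In fact, evaluating at $\Tr(f)=k+\tfrac12$ and using $k(k-1)\ge0$ for integral $k$ gives $g^2+g'^2\ge\tfrac74$, so the constant $-\tfrac34$ is not sharp; the sharp criterion is the one recorded in Remark~\ref{remrealquad}.)

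The part I expect to require the most care is the first paragraph: transcribing the halting condition of Algorithm~10.3 faithfully. Concretely, I would need to check that size-reducing $f$ against $1$ really produces $g=f-k$ with $|\Tr(g)|\le1$, so that the relevant $\mu$-coefficient has absolute value at most $\tfrac12$; that, once the Lov\'asz inequality holds, no other vector of $I$ can displace $1$ as the first vector returned by LLL on the canonical basis $(1,f)$ (which holds because LLL started from $(1,f)$ first size-reduces $f$ to $g$ and then stops as soon as the Lov\'asz test passes); and that $k\ge0$ for a reduced ideal. This is also where the constant $\tfrac34$ of the theorem originates, namely as the Lov\'asz parameter, which is why the hypothesis is $N(f-\tfrac12)\le-\tfrac34$ and not some other bound. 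Once this dictionary is established, the computation of the second paragraph completes the proof.
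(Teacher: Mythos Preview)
Your argument rests on an incorrect reading of both Definition~\ref{def:1} and the reduction algorithm. In the paper, ``$1$-reduced'' means that \emph{some} metric $u\in(\mathbb{R}_{>0})^2$ makes $1$ a \emph{shortest} vector of $uI$, and the reduction step in Section~\ref{redalg} divides by an honest shortest vector, not by the first vector of an LLL basis. Your condition $T_2(g)\ge\tfrac32$ only says that $(1,g)$ satisfies the Lov\'asz inequality with parameter $\tfrac34$; it still allows $\tfrac32\le T_2(g)<2=T_2(1)$, in which case $g$ is strictly shorter than $1$ for the standard metric and the algorithm on $(I,(1,1))$ returns $g^{-1}I\neq I$. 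Concretely, take $a$ with $\sqrt{\Delta_F/4}\le a\le\sqrt{\Delta_F/3}$ and $b<\sqrt{4a^2-\Delta_F}$ (Case~2 of the paper): then $k=0$, $g=f$, and $\|f\|^2=(b^2+\Delta_F)/(2a^2)<2$, so $f$, not $1$, is the shortest vector for the Euclidean metric. Your computation therefore does not establish $1$-reducedness in this range.

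This is the essential difficulty, and it is exactly what the paper's proof has to overcome. When $1$ really is shortest for the standard metric (the paper's Case~1), the conclusion is immediate; but in the remaining Cases~2 and~3 one must exhibit a \emph{different} metric $u$ for which $1$ becomes shortest. The paper does this via the $B_{\min}/B_{\max}$ criterion from \cite{Tran1}, after first showing that $\{\Phi(f),\Phi(1)\}$ (resp.\ $\{\Phi(f-1),\Phi(1)\}$) is an LLL basis so that the test applies. The constant $-\tfrac34$ is not the Lov\'asz parameter; it arises from the inequality $(b-a)^2+3a^2\le\Delta_F$, which is what makes the $B_{\min}\le B_{\max}$ comparison go through. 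Finally, your claim that $-\tfrac34$ is not sharp is contradicted by the explicit examples given just after the statement of the theorem.
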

 The upper bound in this theorem is actually tight. In other words, there exist 1-reduced ideals such that $N(f-1/2) = -3/4$. For instance, this equality is obtained in the quadratic field
	$F=\mathbb{Q}(\sqrt{511})$ for the 1-reduced ideal $I=1\cdot \mathbb{Z} + \frac{19+\sqrt{511}}{25}\cdot \mathbb{Z}$, and in the quadratic field 
	$F=\mathbb{Q}(\sqrt{3})$ for the 1-reduced ideal $I=1\cdot \mathbb{Z} + \frac{1+\sqrt{3}}{2}\cdot \mathbb{Z}$. Especially, the later one is the only case of which the inverse ideal has smallest norm, that is  $N(I^{-1})=\sqrt{|\Delta_F|/3}$ where $\Delta_F$ is the discriminant of $F$.

In addition, our results partly answer the question mentioned in Section 12 in \cite{ref:4} which asks about the number of reduced but not 1-reduced ideals. Indeed, the results of Corollary \ref{corolquad} and Proposition \ref{prophigher} imply that reduced but not 1-reduced ideals are the ones have inverses of large norms among reduced ideals. We further more, are interested in finding properties of reduced but not 1-reduced ideals, estimating how many of them as well their distribution in the topological group $\Pic^0_F$.

Note that we do not consider 1-reduced ideals of imaginary quadratic fields. That is because in these fields, reduced ideals are always 1-reduced.

\section{Preliminaries}\label{sec2}
In this section, let $F$ a number field of degree $n$ and the discriminant $\Delta_F$. Assume that $F$ has $r_1$ real embeddings $\sigma_1, \cdots, \sigma_{r_1}$ and $r_2$ complex embeddings (up to conjugation) $\sigma_{r_1 +1}, \cdots, \sigma_{r_1+r_2}$. Thus $n= r_1 + 2 r_2$. Denote by $\Phi = (\sigma_1, \cdots, \sigma_{r_1},\sigma_{r_1 +1}, \cdots, \sigma_{r_1+r_2})$. Each fractional ideal $I$ of $F$ can be viewed as the lattice $\Phi(I)$ in $F_{\mathbb{R}}=\mathbb{R}^{r_1} \times \mathbb{C}^{r_2}$. 
Now let $u= (u_i) \in \mathbb{R}^{r_1+r_2}$. Then we define $N(u)= \prod_i u_i$. We also identify each element $f \in I$ with $\Phi(f) = (\sigma_i(f))_i \in F_{\mathbb{R}}$ and use the  following metric in $F_{\mathbb{R}}$.
$$\|u f\|^2 = \sum_{i=1}^{r_1} u_i^2 |\sigma_i(f)|^2 + 2 \sum_{i=r_1+1}^{r_1+r_2} u_i^2 |\sigma_i(f)|^2.$$


\subsection{Reduced ideals}
\begin{definition}
	A fractional ideal $I$ is called \textit{reduced} if $1 $ is minimal in $I$. In other words, if $g \in I$ and $|\sigma_i(g)| < 1$ for all $i$ then $g=0$.
\end{definition}

\begin{definition}
	Let $I$ be a fractional ideal. Then 1 is called  
	\textit{primitive } in $I$ if $1 \in I$ and it is not divisible by any integer $d \geq 2$.
\end{definition}

\begin{definition}
	Let $I $ be a fractional ideal in $F$ and let $u\in (\mathbb{R}_{>0})^{r_1+r_2}$. 
	The \textit{length of an element $g$ of $I$ with respect  to $u$} is defined by $\|g\|_u:= \|u g\| $.
\end{definition}

\begin{definition}\label{def:1}
	A fractional ideal $I$ is called $1$-\textit{reduced} if:
	\begin{itemize}
		\item $1$ is primitive in $I$, and
		\item  there exists $u \in \prod_{\sigma}\mathbb{R}_{>0}$  such that$  \|1\|_u \leq \|g\|_u $  for all $ g \in I\backslash \{0\}$.
	\end{itemize}
\end{definition}

\begin{remark}\label{aboutu}\qquad
\begin{itemize}
\item The second condition of Definition \ref{def:1} is equivalent to saying that there exists a metric $u$ such that with respect to this metric, the vector $1$ is a shortest vector in the lattice $I$.
\item Since the lattice $L=uI := \{(u_i \sigma_i(x))_i: x \in I \} \subset F_{\mathbb{R}}$  is isometric to the lattice $\Phi(I)$ with respect to the length function $\|\hspace{0.2cm}\|_u$, the second condition of Definition \ref{def:1} is equivalent to saying that $u$ is a shortest vector of the lattice $L$.
\item If $u= (u_i) \in (\mathbb{R}_{>0})^{r_1+r_2}$ satisfies the second condition of Definition \ref{def:1}, then so is $u' = \left(\frac{u_{\sigma}}{N(u)^{1/n}}\right)_{\sigma}\in (\mathbb{R}_{>0})^{r_1+r_2}$ and  $N(u')=1$. Therefore, we can always assume that $N(u) =1$. 
\item If $u \in \mathbb{R}^{r_1+r_2}$ and $N(u)=1$ then $\|u\|^2 \ge n N(u)^{2/n}= n$ by the arithmetic--geometric
mean inequality.
\end{itemize}

\end{remark}

\subsection{The reduction algorithm}\label{redalg}
Given an ideal lattice $I$ with a metric $u$ such that the covolume of this ideal is $\sqrt{|\Delta_F|}$. 
Compute  a reduced ideal $J$ such that $(J, N(J)^{-1/n})$ is close to $(I,u)$ in $\Pic_F^0$ (see Algorithm 10.3 in \cite{ref:4}).

\textbf{Description}. We compute an LLL-reduced basis $b_1, \cdots , b_n$ of the lattice $L= u I \subset F_{\mathbb{R}}$. Then we compute a shortest vector $x$ in $L$ as follows. Any shortest vector $x = \sum_{i=1}^{i=n} m_i b_i$ satisfies $\|x\|/\|b_1\| \le 1$. Therefore the coordinates $m_i \in \mathbb{Z}$ are bounded independent of the discriminant of $F$. To compute a shortest vector in the lattice in time polynomial in $\log{|\Delta_F|}$, we may therefore just try all possible $m_i$. To find a reduced ideal $J$ such that $(J, N(J)^{-1/n})$ is close to $(I,u)$ in $\Pic_F^0$, we compute a shortest vector $f$ in the lattice $(I, u)$. The fractional ideal $J= f^{-1} I$ is then reduced.  In addition, the distance between $(I, u)$ and $(J, N(J)^{-1/n})$ in $\Pic_F^0$ is at most $\log{|\Delta_F|}$.

\begin{remark}
The ideal $J$ obtained from the reduction algorithm above is 1-reduced. First, it is easy to show that $1$ is primitive in $J$. Now let $v = u |f|: = (u_i\cdot |\sigma_i(f)|)_i  \in (\mathbb{R}_{>0})^{r_1+r_2}$. We then have the following.
$$\|1\|_v = \| v \| = \|u |f|\| = \|u f\| =\|f\|_u.$$
Any element $h$ of $J$ has the form $h= f^{-1} g$ for some $g \in I$. Thus
$$\|h\|_v =\| f^{-1} g\|_v= \|f^{-1} g v\|=\|f^{-1} g u |f| \|_v = \|g u\|= \|g\|_u.$$
Since $f$ is a shortest vector in the lattice $I$ with respect to the metric $u$, we have $\|f\|_u \le \|g\|_u \text{ for all } g\in I\backslash \{0\}$. Therefore 
$$\|1\|_v  \le \|h\|_v  \text{ for all } h \in J\backslash \{0\}.$$
Thus $J$ is 1-reduced.
\end{remark}

\section{A result for an arbitrary field}\label{sec4}
	
Using Remark \ref{aboutu}, we can prove the following result where $\gamma_n$ is the Hermite constant in dimension $n$ \cite{blichfeldt}.

\begin{proposition}\label{prophigher}
	Let $c_n= \left(\frac{n}{\gamma_n}\right)^n$ and let $I$ be a fractional ideal containing $1$. Then $I$ is not 1-reduced if $N(I^{-1}) > \sqrt{|\Delta_F|/c_n}$.
\end{proposition}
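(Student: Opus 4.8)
The plan is to argue by contraposition: assume $I$ is 1-reduced and deduce $N(I^{-1}) \le \sqrt{|\Delta_F|/c_n}$. By Definition \ref{def:1} there exists $u \in \prod_\sigma \mathbb{R}_{>0}$ with $\|1\|_u \le \|g\|_u$ for every $g \in I\setminus\{0\}$, and by the third item of Remark \ref{aboutu} we may normalize so that $N(u) = 1$. Then the quantity $\|1\|_u = \|u\|$ is the length of a shortest nonzero vector of the lattice $L = uI \subset F_{\mathbb{R}} \cong \mathbb{R}^n$ (second item of Remark \ref{aboutu}).

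The next step is to invoke Hermite's inequality for the lattice $L$: the shortest vector length $\lambda_1(L)$ satisfies $\lambda_1(L)^2 \le \gamma_n\, \co(L)^{2/n}$, where $\co(L)$ is the covolume of $L$. On the one hand, by the last item of Remark \ref{aboutu} we have $\|u\|^2 \ge n$ (since $N(u) = 1$), so $\lambda_1(L)^2 = \|u\|^2 \ge n$. On the other hand, multiplying the coordinates of $\Phi(I)$ by $u$ scales the covolume by $|N(u)| = 1$, so $\co(L) = \co(\Phi(I))$. The standard formula relating the covolume of an ideal lattice to its norm gives $\co(\Phi(I)) = N(I)\sqrt{|\Delta_F|}$ (up to the usual normalization constant $2^{-r_2}$, which I will fold in consistently with the metric convention of Section \ref{sec2}). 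Combining these,
\begin{equation*}
n \le \|u\|^2 = \lambda_1(L)^2 \le \gamma_n\, \co(L)^{2/n} = \gamma_n \bigl(N(I)\sqrt{|\Delta_F|}\bigr)^{2/n}.
\end{equation*}

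Rearranging yields $N(I)\sqrt{|\Delta_F|} \ge (n/\gamma_n)^{n/2} = \sqrt{c_n}$, hence $N(I^{-1}) = 1/N(I) \le \sqrt{|\Delta_F|}/\sqrt{c_n} = \sqrt{|\Delta_F|/c_n}$, which is the required bound; the contrapositive is exactly the statement. The only genuinely delicate point is bookkeeping the normalization constants: one must check that the covolume of the ideal lattice under the metric $\|\cdot\|$ fixed in Section \ref{sec2} is indeed $N(I)\sqrt{|\Delta_F|}$ and that the ``$N(u)=1$'' normalization of Remark \ref{aboutu} is consistent with ``$N(u)$'' meaning the product over the $r_1+r_2$ archimedean places rather than all $n$ embeddings. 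Everything else is a direct chaining of Remark \ref{aboutu} with the Hermite bound, so I expect the proof to be short once the constants are pinned down.
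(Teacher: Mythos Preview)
Your proof is correct and follows essentially the same route as the paper's: normalize $N(u)=1$ via Remark~\ref{aboutu}, identify $u$ as a shortest vector of $L=uI$, combine the Hermite bound $\|u\|^2\le \gamma_n\co(L)^{2/n}$ with $\co(L)=N(I)\sqrt{|\Delta_F|}$ and the lower bound $\|u\|^2\ge n$. The only cosmetic difference is that the paper phrases it as a proof by contradiction rather than contraposition.
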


\begin{proof}
	Suppose that $I$ is $1$-reduced. Then there is some $u \in (\mathbb{R}_{>0})^n$ such that $u$ is a shortest vector in the lattice $L= uI$. Thus its length is bounded as follows.
	$$\| u\|^2 \le \gamma_n \co(L)^{2/n}.$$
	Since $N(I^{-1}) > \sqrt{|\Delta_F|/c_n}$, we have $N(I) < \sqrt{c_n/|\Delta_F|}$. We can assume that $N(u)=1$ as in the Remark \ref{aboutu}. It follows that
	$\co(L) = N(u) N(I) \sqrt{|\Delta_F|}= N(I) \sqrt{|\Delta_F|}< \sqrt{c_n}$. Therefore
	$\|u \|^2 < \gamma_n c_n^{1/n} = n$, contradicting the fact that $\|u\|^2 \ge n$ as in Remark \ref{aboutu}. 
	Hence $I$ is not $1$-reduced.
	
\end{proof}

The table below shows values of $c_n$ corresponding to known values of $\gamma_n$ (in dimensions 1 to 8 and 24).
\vspace*{0.2cm}
        \label{table1}
				\begin{center}
				\resizebox{0.5\textwidth}{!}{
          \begin{tabular}{|c| c |c |c| c| c| c |c| c|} 
               	\hline 
               	$n$ & 2 & 3 & 4 & 5 & 6 & 7 & 8 & 24 \\ 
													
               	\hline                	
               	$c_n$ & 3 & $27/2$ & $2^6$ & $5^5/8$ & $3^7$ & $7^7/64$ & $4^8$ & $6^{24}$\\
               	\hline
        \end{tabular}
				}
				\end{center}
\vspace*{0.2cm}
Note that our result in Proposition \ref{prophigher} agrees with Theorem \ref{thmrealquad} in case $n=2$, i.e., the norm of the inverse of a $1$-reduced ideal must be less than or equal to $\sqrt{|\Delta_F|/3}$.

\section{Real quadratic fields}\label{sec3}
In this section, let $F$ be a real quadratic fields with two real embeddings $\sigma$ and $\sigma'$ that send $\sqrt{\Delta_F}$ to $\sqrt{\Delta_F}$ and $-\sqrt{\Delta_F}$ respectively. We denote by $\Phi = (\sigma, \sigma')$ the map from $F$ to $\mathbb{R}^2$.

\subsection{Reduced ideals of real quadratic fields}

Let $I$ be a fractional ideal of $F$ and let $u= (u_1, u_2) \in \mathbb{R}^2$. We identify each element $f \in I$ with $\Phi(f) = (\sigma(f), \sigma'(f)) \in \mathbb{R}^2$ and use the standard metric in $\mathbb{R}^2$ as follows. 
$$\|u g \|^2 = u_1^2 [\sigma(f)]^2 + u_2^2 [\sigma'(f)]^2.$$

\begin{remark}\label{remrealquad}	\qquad
	\begin{itemize}
		\item Any reduced ideal $I$ of $F$ can be written as the following form
		$$I = \mathbb{Z} + f \mathbb{Z} \text{ for a unique   } f \in F \text{ satisfying } \sigma(f)>1 \text{  and } -1<\sigma'(f)<0.$$
		In particular, $f$ can be written as 
		\begin{equation}\label{eqabc}
		f =\frac{b+\sqrt{\Delta_F}}{2 a},\qquad (a, b, c) \in \mathbb{Z}^3, \qquad \Delta_F=b^2-4 a c \text{ and  } |\sqrt{\Delta_F}- 2a|<b<  \sqrt{\Delta_F}.
		\end{equation}
			Moreover, the inverse of $I$ is an integral ideal, that is $I^{-1} \subset O_F$, and its norm $N(I^{-1}) =a$.
		(See Example 8.2 in \cite{ref:4} for more details.)
		
		Here we view  $I$ as the lattice $\Phi(I)$ in $\mathbb{R}^2$ as below.
		\[
		I \equiv \Phi(I) = 
		\left[ {\begin{array}{cc}
			1 & \sigma(f)\\ 
			1 & \sigma'(f)\\
			\end{array} } \right] \mathbb{Z}^2 =  \left[ {\begin{array}{cc}
			1 & \frac{b+\sqrt{\Delta_F}}{2a}\\ 
			1 & \frac{b-\sqrt{\Delta_F}}{2a}\\
			\end{array} } \right] \mathbb{Z}^2 
		\]
		
		In other words, $I$ is identified to the free $\mathbb{Z}$-module generated by two vectors 
		$$\Phi(1)=(\sigma(1), \sigma'(1)) = (1, 1)	\text{  and  } \Phi(f)=(\sigma(f), \sigma'(f)) =\left(\frac{b+\sqrt{\Delta_F}}{2a}, \frac{b-\sqrt{\Delta_F}}{2a}\right).$$

	\end{itemize}
	
\end{remark}

\subsection{Test 1-reduced property}\label{test}
Assume that $I$ is reduced and the shortest vectors of $I$ have length strictly less than $\sqrt{2}$. In this part, we show a method to test whether $I$ is $1$-reduced or not (see \cite{Tran1} for more details).

Let $g \in I$. We denote by $\Phi(g) = (g_1, g_2) \in \mathbb{R}^2$ where $g_1 =\sigma(g)$ and $g_2 = \sigma'(g)$. Denote by
$$G = \left\{g \in I: \left(g_1^2-1 \right) \left(g_2^2-1 \right) < 0 \text{ and } \|g\| < \frac{8}{ \pi} \right\} = G_1 \cup G_2  \text{ where }$$
$$G_1 = \left\{g \in G: g_1^2-1 <0   \} \text{ and }  G_2 = \{g \in G: g_2^2-1<0   \right \}.$$
 
 For each $g \in G$, we define 
 $$ B(g):=\left( -\frac{ g_1^2-1}{ g_2^2-1}\right)^{1/4}.$$
 Then denote
 \begin{equation}
 B_{min} =
  \begin{cases} 
    \frac{1}{2} \hspace*{3cm} \text{  if  } & G_1 = \emptyset \\ 
    \max \left\{ B(g): g \in G_1 \right\}  \text{   if } & G_1 \neq \emptyset.
    \end{cases}
 \end{equation} 
 \begin{equation}
  B_{max} =
  \begin{cases} 
    2  \hspace*{2.8cm} \text{  if  } & G_2 = \emptyset \\ 
    \min \left\{ B(g): g \in G_2 \right\}  \text{   if } & G_2 \neq \emptyset.
    \end{cases}
 \end{equation}
 The ideal $I$ is then $1$-reduced if and only if $B_{max}  \ge B_{min}$ (see Proposition 3.5 in \cite{Tran1}). The Algorithm 4.1 in \cite{Tran1} provides a method to compute $B_{max}$ and $B_{min}$ as follows.
 
 Let $\{b_1=(b_{11}, b_{12}), b_2=(b_{21}, b_{22})\}$ be an LLL-basis for the lattice $I$. We compute the integers $t_1 \leq t_2$ as the following.
 \begin{itemize}
 	\item  If $0<b_{11}<1$ and $1<|b_{12}|<\sqrt{2}$ then $t_1 \leq t_2$ are between $\frac{-1-b_{22}}{b_{12}}$ and  $\frac{1-b_{22}}{b_{12}}$ . 
 	\item If $1<b_{11}<\sqrt{2}$ and $0<|b_{12}|<1$ then $t_1 \leq t_2$ are between $\frac{-1-b_{21}}{b_{11}}$ and $\frac{1-b_{21}}{b_{11}}$.
 \end{itemize} 
  Then $B_{max}$ and $B_{min}$ are among $B(g)$ where 
  $$g \in  G_3 = \{b_1,  t_1 b_1 +b_2,  t_2 b_1 +b_2,  s_1b_1 + b_2 \text{ with } |s_1| \leq 2\}.$$

Especially, if we further assume that $b_2=(b_{21}, b_{22})= (1,1)$, then $t_1 = t_2=0$. Thus, the set $G_3$ can be $\cdots$ as
$$ G_4 = \{b_1, b_1 + b_2,  b_2 -b_1, 2b_1 + b_2,  b_2 -2b_1\}.$$

\subsection{Proof of Theorem \ref{thmrealquad}}

 	Note that the condition $N(f-1/2) \le -3/4$ is equivalent to the following.
 	\begin{equation}\label{eqconditionb}
 	(b-a)^2+ 3a^2 \le \Delta_F.
 	\end{equation}
 	The condition \ref{eqconditionb} implies that $3a^2 \le \Delta_F$. It is also equivalent to the following.
 	$$a-\sqrt{\Delta_F-3a^2} \le b \le a+\sqrt{\Delta_F-3a^2}.$$
 	Therefore, we can divide the proof into three cases as below. 
 	\begin{itemize}
 		\item \textbf{Case 1:}  $a\le \sqrt{\Delta_F/4}$ or\\
 		$\sqrt{\Delta_F/4} \le a \le  \sqrt{\Delta_F/3}$ and $\sqrt{4a^2-\Delta_F} \le b \le 2a-\sqrt{4a^2-\Delta_F}$.\\
 		By Lemma \ref{case12}, $1$ is a shortest vector of the lattice $I$. Therefore it is $1$-reduced.

 		\item \textbf{Case 2:} $\sqrt{\Delta_F/4} \le a \le  \sqrt{\Delta_F/3}$ and $a-\sqrt{\Delta_F-3a^2} \le b \le \sqrt{4a^2-\Delta_F} $. In this case, the two vectors $\Phi(f)$ and $\Phi(1)$ form an LLL-reduced basis for $I$ (see Lemma \ref{case3}). Hence, the result of Section \ref{test} can be used to show that $I$ is 1-reduced. We first compute $B_{max}$ and $B_{min}$ among $B(g)$ where
 		$$ g \in  G_4 = \{\Phi(f), \Phi(f) + \Phi(1), -\Phi(f)+\Phi(1), 2\Phi(f) + \Phi(1),  -2\Phi(f)+ \Phi(1)\}.$$
 		Since the vector $-2\Phi(f)+ \Phi(1)$ has both coordinates greater than 1, we can eliminate it from the set $G_4$. Furthermore, we obtain that \\
 		$$B_{min}= B( -\Phi(f) + \Phi(1))= \frac{(\sqrt{\Delta_F} +b)(4a-b-\sqrt{\Delta_F})}{(\sqrt{\Delta_F}-b)(4a-b+\sqrt{\Delta_F})},$$
 		
 		$$B_{max}= \min\{ B(\Phi(f)), B(\Phi(f) + \Phi(1)), B(2\Phi(f) + \Phi(1))\}.$$
 		
 		$$B(\Phi(f))= \frac{(2a+b+\sqrt{\Delta_F})(b+\sqrt{\Delta_F}-2a)}{(2a+b-\sqrt{\Delta_F})(2a-b+\sqrt{\Delta_F})},$$
 		
 		$$B(\Phi(f) +\Phi(1))= \frac{(\sqrt{\Delta_F} +b)(4a+b+\sqrt{\Delta_F})}{(\sqrt{\Delta_F}-b)(4a+b-\sqrt{\Delta_F})},$$

 		$$B(2\Phi(f) +\Phi(1))= \frac{(\sqrt{\Delta_F} +b)(2a+b+\sqrt{\Delta_F})}{(\sqrt{\Delta_F}-b)(2a+b-\sqrt{\Delta_F})}.$$
 		
 		By the condition $b \le \sqrt{\Delta_F}$, it is obvious that $B_{min} \le B(\Phi(f) +\Phi(1))$. In addition, $ B_{min} \le B(2\Phi(f) +\Phi(1))$ since $4a-b-\sqrt{\Delta_F} < 2a+b+\sqrt{\Delta_F}$ and 
 		$4a-b+\sqrt{\Delta_F} > 2a+b-\sqrt{\Delta_F}$.
 		
 		Using the fact that $b < \sqrt{\Delta_F} \le 2a$  and the condition \ref{eqconditionb}, all the factors of the following difference
 		$$B(\Phi(f))- B_{min} = \frac{8 a \sqrt{\Delta_F} \hspace*{0.2cm}[\Delta_F- 3a^2- (a-b)^2 ]}{(\sqrt{\Delta_F}-b)[4a^2-(\sqrt{\Delta_F}-b)^2] [4a+\sqrt{\Delta_F}-b]}$$
 		are non negative. In other words,  $ B_{min} \le B(\Phi(f))$.
 		Therefore $B_{min}  \le B_{max}$, then Section \ref{test} says that $I$ is 1-reduced.
 		
 		\item \textbf{Case 3:} $\sqrt{\Delta_F/4} \le a \le  \sqrt{\Delta_F/3}$ and $2a-\sqrt{4 a^2-\Delta_F} \le b \le a + \sqrt{\Delta_F- 3a^2} $. Lemma \ref{case4} shows that $\Phi(f-1)$ and $\Phi(1)$ form an LLL-reduced basis for $I$. By using an argument similar to the proof of Case 2, we obtain the following.	
 		$$B_{max}= B(\Phi(f))  \qquad \qquad \text{ and }$$		
 		$$B_{min}= \max\{ B(-\Phi(f) + \Phi(1)), B(-\Phi(f) +2\Phi(1)), B(-2\Phi(f) + 3\Phi(1))\} \qquad \text{ where }$$
 		$$B(-\Phi(f) +2\Phi(1))= \frac{(\sqrt{\Delta_F} +b-2a)(6a-b-\sqrt{\Delta_F})}{(\sqrt{\Delta_F}+2a-b)(6a +\sqrt{\Delta_F}-b)},$$		 
 		$$B(-2\Phi(f) + 3\Phi(1))= \frac{(b+\sqrt{\Delta_F} -2a)(4a-b-\sqrt{\Delta_F})}{(\sqrt{\Delta_F}+2a-b)(4a +\sqrt{\Delta_F}-b)}.$$
 		Similar to Case 2, we have $B_{max}$ is greater or equal to $B(-\Phi(f) + \Phi(1)), B(-\Phi(f) +2\Phi(1)) $ and $B(-2\Phi(f) + 3\Phi(1))$ by the bounds on $\Delta_F$ and $b$. Thus, $B_{max} \ge B_{min}$. Therefore $I$ is 1-reduced by the result of Section \ref{test}.
 		
 	\end{itemize}

To complete our proof, we prove the following results.

 \begin{lemma}\label{case12}
 	With the assumption in Theorem \ref{thmrealquad}, if one of the following holds
 	\begin{enumerate}
 		\item $a\le \sqrt{\Delta_F/4}$, or
 		\item $\sqrt{\Delta_F/4} \le a \le  \sqrt{\Delta_F/3}$ and $\sqrt{4a^2-\Delta_F} \le b \le 2a-\sqrt{4a^2-\Delta_F}$,
 	\end{enumerate}
 	then $1$ is shortest in the lattice $I$.
 \end{lemma}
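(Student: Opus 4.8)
The plan is to reduce the geometric statement to a single elementary inequality about the integral binary quadratic form attached to $I$. Writing an arbitrary nonzero element of $I$ as $g = m + n f$ with $(m,n)\in\mathbb{Z}^2\setminus\{(0,0)\}$ and using $\sigma(f) = \frac{b+\sqrt{\Delta_F}}{2a}$, $\sigma'(f) = \frac{b-\sqrt{\Delta_F}}{2a}$ from Remark \ref{remrealquad}, I would compute $\sigma(g) = \frac{(2am+nb)+n\sqrt{\Delta_F}}{2a}$ and $\sigma'(g) = \frac{(2am+nb)-n\sqrt{\Delta_F}}{2a}$, which collapses the length to
$$2a^2\,\|g\|^2 = (2am+nb)^2 + n^2\Delta_F .$$
Since $\Phi(1) = (1,1)$ gives $\|1\|^2 = 2$, the assertion ``$1$ is shortest in $I$'' becomes exactly $(2am+nb)^2 + n^2\Delta_F \ge 4a^2$ for every $(m,n)\neq(0,0)$. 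So the whole lemma is this quadratic inequality, and everything after is a short case analysis on $n$.

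First I would dispose of the easy ranges of $n$. For $n=0$ the inequality reads $4a^2m^2\ge 4a^2$, which holds for $m\neq 0$. For the remaining cases I would invoke condition \eqref{eqconditionb}, which gives $\Delta_F\ge 3a^2$ under both hypotheses of the lemma; hence if $|n|\ge 2$ then $n^2\Delta_F\ge 4\Delta_F\ge 4a^2$ and we are done regardless of $m$. Moreover, hypothesis (1) is $4a^2\le\Delta_F$, so even for $n=\pm1$ the term $n^2\Delta_F$ by itself already exceeds $4a^2$; this finishes case (1) completely. Thus the only real content is hypothesis (2) with $n=\pm1$, and by replacing $g$ by $-g$ I may take $n=1$, reducing to: show $(2am+b)^2+\Delta_F\ge 4a^2$ for all $m\in\mathbb{Z}$.

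For this last step I would use the normalization \eqref{eqabc} together with $a\ge\sqrt{\Delta_F/4}$, which gives $0 < b < \sqrt{\Delta_F}\le 2a$. Consequently, among the numbers $2am+b$, $m\in\mathbb{Z}$, the two lying closest to $0$ are $b$ (at $m=0$) and $b-2a$ (at $m=-1$), so $\min_{m\in\mathbb{Z}}(2am+b)^2 = \min\{b^2,\,(2a-b)^2\}$. The hypothesis $\sqrt{4a^2-\Delta_F}\le b\le 2a-\sqrt{4a^2-\Delta_F}$ says precisely that both $b$ and $2a-b$ are at least $\sqrt{4a^2-\Delta_F}$, i.e.\ $b^2\ge 4a^2-\Delta_F$ and $(2a-b)^2\ge 4a^2-\Delta_F$; adding $\Delta_F$ to the smaller of the two gives $\ge 4a^2$, which is exactly what was needed. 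The one genuinely delicate point is this nearest-point identification $\min_m(2am+b)^2=\min\{b^2,(2a-b)^2\}$, since it is what forces the use of $0<b<2a$ and hence of the case-(2) location of $a$; the rest is bookkeeping. I would also note in passing that equality $(2am+b)^2+\Delta_F = 4a^2$ can occur (e.g.\ $b=\sqrt{4a^2-\Delta_F}$, $g=f$), so the ``shortest'' here allows a tie — consistent with the tightness claimed for Theorem \ref{thmrealquad}.
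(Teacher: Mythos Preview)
Your argument is correct and follows essentially the same path as the paper's: express $\|g\|^2$ in terms of $(m,n)$, dispose of $n=0$ and $|n|\ge 2$ using $\Delta_F\ge 3a^2$, settle Case~(1) for $n=\pm1$ via $\Delta_F\ge 4a^2$, and in Case~(2) with $n=\pm1$ appeal to the bounds on $b$. Your handling of that final step is in fact a bit tidier: you minimize $(2am+b)^2$ over $m\in\mathbb{Z}$ as $\min\{b^2,(2a-b)^2\}$ and use \emph{both} inequalities $b\ge\sqrt{4a^2-\Delta_F}$ and $2a-b\ge\sqrt{4a^2-\Delta_F}$ symmetrically, whereas the paper asserts $b/(2a)\le 1/2$ to force $m=0$ and then uses only the lower bound on $b$.
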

\begin{proof}
	By Remark \ref{remrealquad}, we can write $I$ as a lattice in $\mathbb{R}^2$ as below.
	\[
	I= \left[ {\begin{array}{cc}
		1 & \frac{b+\sqrt{\Delta_F}}{2a}\\ 
		1 & \frac{b-\sqrt{\Delta_F}}{2a}\\
		\end{array} } \right] \mathbb{Z}^2.
	\]
	The integers $a, b$ satisfy the condition \ref{eqabc} in Remark \ref{remrealquad}.
	
	Let $g \in I $. Then 
	$$g=\left(m + n\cdot \frac{b+\sqrt{\Delta_F}}{2a},  m + n\cdot \frac{b-\sqrt{\Delta_F}}{2a}\right) \text{  for some } (m, n) \in \mathbb{Z}^2.$$ 
	Thus,
	\begin{equation}\label{eqleng}
	\|g\|^2= 2\left[ \left(m+\frac{nb}{2a}\right)^2 + \frac{n^2 \Delta_F}{4 a^2}\right].	 
	\end{equation} 
	\textbf{Case 1:} $a\le \sqrt{\Delta_F/4}$. We will show that $1$ is shortest in the lattice $I$. Equivalently, we will prove that if 
	$\|g\|^2 < 2$ for some $g \in I$ then $g=0$.
	Indeed, if  $\|g\|^2<2$ holds then by $\ref{eqleng}$, we have  
	$$\frac{n^2 \Delta_F}{4 a^2}<1.$$
	Hence $n= 0$ since $\Delta_F \ge 4 a^2$. Moreover, $\|g\|^2=2 m^2 \ge 2$ for all $m \ne 0$. Thus $m=0$ therefore $g=0$. \\
	\textbf{Case 2:} $\sqrt{\Delta_F/4} \le a \le  \sqrt{\Delta_F/3}$ and $\sqrt{4a^2-\Delta_F} \le b \le 2a-\sqrt{4a^2-\Delta_F}$. Similar to Case 1, we also show that $1$ is shortest in $I$. 
	Let $g \in I$ such that $\|g\|^2<2$. Then $n^2 \le 1$ since $\Delta_F \ge 3 a^2$ and by $\ref{eqleng}$. If $n^2=1$, then $\frac{n^2 \Delta_F}{4 a^2} \ge \frac{3}{4}$. Thus 		
	$$\left|m+\frac{nb}{2a}\right| < \frac{1}{2}.$$
	The bounds on $b$ imply that $ 0 \le b/(2a) \le 1/2$. Therefore $m=0$ and then
	$$\|g\|^2= \frac{2(b^2+\Delta_F)}{4 a^2},$$
	that is at least $2$ by the lower bound on $b$ and $\Delta_F$. Thus $n=0$ and hence $g=0$ as Case 1. 
\end{proof}

\begin{lemma}\label{case3}
	With the assumption in Theorem \ref{thmrealquad}, if $\sqrt{\Delta_F/4} \le a \le  \sqrt{\Delta_F/3}$ and $a-\sqrt{\Delta_F-3a^2} \le b \le \sqrt{4a^2-\Delta_F} $, then $f$ is a shortest vector of the lattice $I$. Moreover, the two vectors $\Phi(f)$ and $\Phi(1)$ form an LLL-reduced basis for $I$.
\end{lemma}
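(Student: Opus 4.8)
The plan is to argue directly from the explicit lattice description in Remark \ref{remrealquad}: $I$ is the $\mathbb{Z}$-span of $\Phi(1)=(1,1)$ and $\Phi(f)=\bigl(\tfrac{b+\sqrt{\Delta_F}}{2a},\tfrac{b-\sqrt{\Delta_F}}{2a}\bigr)$, so a general element $g$ has coordinates $(m,n)\in\mathbb{Z}^2$ and, by \eqref{eqleng}, length $\|g\|^2=2\bigl[(m+\tfrac{nb}{2a})^2+\tfrac{n^2\Delta_F}{4a^2}\bigr]$. First I would record two elementary consequences of the hypotheses. Since $\Delta_F\ge 3a^2$ and $b\le\sqrt{4a^2-\Delta_F}$ we get $0<b\le a$ (positivity from $b>|\sqrt{\Delta_F}-2a|\ge 0$ in \eqref{eqabc}), hence $0<\tfrac{b}{2a}\le\tfrac12$; and $\|\Phi(f)\|^2=\tfrac{b^2+\Delta_F}{2a^2}$, which is $\le 2$ precisely because $b\le\sqrt{4a^2-\Delta_F}$.

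Next I would show that $f$ is a shortest vector of $I$ by a case split on $|n|$ in the length formula. For $n=0$ and $g\ne 0$ we have $\|g\|^2=2m^2\ge 2\ge\|\Phi(f)\|^2$. For $|n|=1$, the integer $m$ minimizing $(m\pm\tfrac{b}{2a})^2$ is $m=0$ because $0<\tfrac{b}{2a}\le\tfrac12$, so $\|g\|^2\ge 2\bigl[\tfrac{b^2}{4a^2}+\tfrac{\Delta_F}{4a^2}\bigr]=\|\Phi(f)\|^2$, with equality attained at $(m,n)=(0,1)$, i.e.\ at $g=f$. For $|n|\ge 2$ we bound crudely $\|g\|^2\ge\tfrac{n^2\Delta_F}{2a^2}\ge\tfrac{2\Delta_F}{a^2}\ge 6>2\ge\|\Phi(f)\|^2$. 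Hence $\|\Phi(f)\|\le\|g\|$ for every $g\in I\setminus\{0\}$.

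For the LLL claim I would take the ordered basis $(b_1,b_2)=(\Phi(f),\Phi(1))$ and compute its single Gram--Schmidt coefficient $\mu=\mu_{2,1}=\tfrac{\langle\Phi(1),\Phi(f)\rangle}{\|\Phi(f)\|^2}=\tfrac{2ab}{b^2+\Delta_F}$, using $\langle\Phi(1),\Phi(f)\rangle=\tfrac{b}{a}$. Size reduction $|\mu|\le\tfrac12$ is equivalent to $b^2-4ab+\Delta_F\ge 0$, and this is exactly where the full strength of \eqref{eqconditionb} is needed: substituting $\Delta_F\ge(b-a)^2+3a^2$ yields $b^2-4ab+\Delta_F\ge 2(b-a)(b-2a)\ge 0$, the last inequality because $b\le a$ forces both factors to be $\le 0$. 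Finally, writing $\|b_2^{*}\|^2=\|\Phi(1)\|^2-\mu^2\|\Phi(f)\|^2$, the Lov\'asz condition $(\tfrac34-\mu^2)\|b_1\|^2\le\|b_2^{*}\|^2$ collapses to $\tfrac34\|\Phi(f)\|^2\le\|\Phi(1)\|^2=2$, which holds since $\|\Phi(f)\|^2\le 2$ (equivalently, since $f$ is a shortest vector, so $\|\Phi(f)\|\le\|\Phi(1)\|$). I expect the only delicate points to be the sign analysis producing $0<\tfrac{b}{2a}\le\tfrac12$ and the realization that $|\mu|\le\tfrac12$ genuinely requires \eqref{eqconditionb}, not merely $\Delta_F\ge 3a^2$; everything else is routine bookkeeping with the binary quadratic form.
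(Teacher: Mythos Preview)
Your proof is correct and essentially mirrors the paper's: same length formula \eqref{eqleng}, same case split on $|n|$ to locate the shortest vector, same Gram--Schmidt coefficient $\mu=\tfrac{2ab}{b^2+\Delta_F}$, and your explicit check of the Lov\'asz condition (which the paper leaves implicit) indeed reduces to $\|\Phi(f)\|\le\|\Phi(1)\|$. One small correction to your commentary: $|\mu|\le\tfrac12$ does not genuinely require \eqref{eqconditionb}---the paper derives it from the lemma's upper bound $b\le\sqrt{4a^2-\Delta_F}$ alone, since combined with $\Delta_F\ge 3a^2$ this gives $b\le 2a-\sqrt{4a^2-\Delta_F}$, hence $(2a-b)^2\ge 4a^2-\Delta_F$, i.e.\ $4ab\le b^2+\Delta_F$.
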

\begin{proof}
	Let $g \in I\backslash \{0\}$. With the notations as in the proof of Lemma \ref{case12}, we show that if $\|g\|^2<2$ then $g=\pm f$. Hence $f$ is shortest in $I$. 
	
	Since $\Delta_F/(4 a^2) \ge 3/4$, using a similar argument as in Case 2 of Lemma \ref{case12} leads to $n^2 \le 1$. If $n=0$ then $\|g\|^2 \ge 2 m^2 \ge 2$ because $m \ne 0$. Thus $n= \pm 1$. Hence
	$\left|m \pm \frac{b}{2a}\right| < \frac{1}{2}$ by \ref{eqleng}, then $\left|m  \right| < \frac{1}{2} +\frac{b}{2a}$.
	The fact that $b \le \sqrt{4a^2- \Delta_F} \le a$ implies that $\frac{b}{2a} \le \frac{1}{2}$. Thus $m=0$ and then $g = \pm f$.
	
	Now let 
	$$\mu = \frac{\langle \Phi(f), \Phi(1) \rangle }{\|\Phi(f)\|^2}=\frac{\sigma(f) + \sigma'(f)}{\|f\|^2}= \frac{2 a b}{b^2 + \Delta_F} .$$ Since $\Delta_F \ge 3 a^2$, we have $\sqrt{4a^2-\Delta_F} \le 2a-\sqrt{4a^2-\Delta_F}$. Consequently, one has $b \le  2a-\sqrt{4a^2-\Delta_F}$. Hence $4ab \le b^2 + \Delta_F$, which implies that $|\mu |\le \frac{1}{2}$. Thus, $\{ \Phi(f), \Phi(1)\} $ is an LLL-reduced basis for $I$.
\end{proof}

\begin{lemma}\label{case4}
	With the assumption in Theorem \ref{thmrealquad}, if $\sqrt{\Delta_F/4} \le a \le  \sqrt{\Delta_F/3}$ and $2a-\sqrt{4 a^2-\Delta_F} \le b \le \sqrt{\Delta_F} $, then $f-1$ is a shortest vector of the lattice $I$. Moreover, the two vectors $\Phi(f-1)$ and $\Phi(1)$ form an LLL-reduced basis for $I$.
\end{lemma}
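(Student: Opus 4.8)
The plan is to imitate the proofs of Lemmas~\ref{case12} and~\ref{case3}, now reading off the element $f-1$. Writing a general $g\in I$ in lattice coordinates $(m,n)\in\mathbb{Z}^2$, formula~\eqref{eqleng} gives $\|g\|^2=2\bigl[(m+\tfrac{nb}{2a})^2+\tfrac{n^2\Delta_F}{4a^2}\bigr]$, and $f-1$ corresponds to $(m,n)=(-1,1)$, so $\|f-1\|^2=\tfrac{(2a-b)^2+\Delta_F}{2a^2}$. First I would convert the hypotheses into the inequalities actually used: $a\le\sqrt{\Delta_F/3}$ gives $\Delta_F\ge 3a^2$, hence $\sqrt{4a^2-\Delta_F}\le a$; combining this with $b\ge 2a-\sqrt{4a^2-\Delta_F}$ and with $b<\sqrt{\Delta_F}\le 2a$ (from Remark~\ref{remrealquad}) yields $a\le b<2a$, i.e. $\tfrac12\le\tfrac{b}{2a}<1$. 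I would also note that $b\ge 2a-\sqrt{4a^2-\Delta_F}$ is equivalent to $(2a-b)^2\le 4a^2-\Delta_F$, which rearranges precisely to $\|f-1\|^2\le 2=\|1\|^2$.

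Second, to show $f-1$ is a shortest vector I would take $g\in I$ with $\|g\|^2<\|f-1\|^2\le 2$ and deduce $g=0$. Since $\tfrac{\Delta_F}{4a^2}\ge\tfrac34$, the length formula forces $n^2\le 1$; if $n=0$ then $\|g\|^2=2m^2<2$ gives $m=0$. If $n=\pm1$, subtracting $\tfrac{\Delta_F}{4a^2}$ from $\tfrac12\|g\|^2<\tfrac12\|f-1\|^2$ turns the inequality into $\bigl(m\pm\tfrac{b}{2a}\bigr)^2<\bigl(1-\tfrac{b}{2a}\bigr)^2$; since $\tfrac12\le\tfrac{b}{2a}<1$, for $n=1$ the choice $m=-1$ gives equality while every other integer $m$ makes the left-hand side at least as large as the right-hand side, and $n=-1$ is symmetric under $g\mapsto-g$. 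Hence no nonzero $g$ is shorter than $f-1$, so $f-1$ is shortest in $I$.

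Third, for the basis claim I would compute $\mu=\tfrac{\langle\Phi(f-1),\Phi(1)\rangle}{\|\Phi(f-1)\|^2}=\tfrac{-2a(2a-b)}{(2a-b)^2+\Delta_F}$ and verify $|\mu|\le\tfrac12$. Setting $w=2a-b\ge 0$, this is the inequality $w^2-4aw+\Delta_F\ge 0$, whose roots in $w$ are $2a\pm\sqrt{4a^2-\Delta_F}$; the estimate $w=2a-b\le\sqrt{4a^2-\Delta_F}\le 2a-\sqrt{4a^2-\Delta_F}$ (the last step again using $\Delta_F\ge 3a^2$) puts $w$ at or below the smaller root, so $w^2-4aw+\Delta_F\ge 0$. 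Since in addition $\|\Phi(f-1)\|\le\|\Phi(1)\|$ by the previous paragraph, the Lov\'asz condition holds automatically, and $\{\Phi(f-1),\Phi(1)\}$ is an LLL-reduced basis of $I$ (it is a basis because $(f,1)\mapsto(f-1,1)$ is a unimodular change of generators).

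The algebra is routine; the one delicate point is that several of these inequalities become equalities on the locus $\Delta_F=3a^2$, where $b=a$, $\|f-1\|^2=2$ and $f-1$ is only tied with $\pm1$ for shortest. So ``shortest vector'' must be read in the non-strict sense, and the case check on $m$ for $n=\pm1$ must be genuinely exhaustive, in particular at the endpoint $\tfrac{b}{2a}=\tfrac12$.
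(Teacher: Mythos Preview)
Your proof is correct and follows essentially the same route as the paper's: use \eqref{eqleng} with $\Delta_F\ge 3a^2$ to force $n^2\le 1$, dispose of $n=0$, case-check the remaining $(m,\pm1)$, and then verify $|\mu|\le\tfrac12$ for the LLL claim. The only cosmetic differences are that the paper compares $\|g\|^2$ against $2$ (concluding $g=\pm(f-1)$) rather than directly against $\|f-1\|^2$, and it simplifies $(2a-b)^2-4a(2a-b)+\Delta_F$ to $b^2-4a^2+\Delta_F$ so that $|\mu|\le\tfrac12$ reduces immediately to $b^2\ge 4a^2-\Delta_F$ instead of locating $w=2a-b$ relative to the quadratic's roots.
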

\begin{proof}
	Let $g \in I\backslash \{0\}$. With the notations as in the proof of Lemma \ref{case12}, we show that if $\|g\|^2<2$ then $g=\pm (f -1)$. Hence $f-1$ is shortest in $I$.
	
	 Since $\Delta_F/(4 a^2) \ge 3/4$, using a similar argument as in Case 2 of Lemma \ref{case12} leads to $n = \pm 1$. Hence
	$\left|m \pm \frac{b}{2a}\right| < \frac{1}{2} \text{ then } \left|m  \right| < \frac{1}{2} +\frac{b}{2a}$.
	The bounds on $b$ and  $\Delta_F$ imply that $\frac{b}{2a} < 1$ and hence $|m| \le 1$. 
	Thus 
	$$g \in \{\pm f, \pm( f +1), \pm( f-1)\}.$$
	Since $\Delta_F \ge 3a^2$, the lower bound $2a-\sqrt{4 a^2-\Delta_F}$ on $b$ is at least $\sqrt{4a^2- \Delta_F}$. Therefore  $b^2 \ge 4 a^2-\Delta_F$, which implies 
	$$\|f\|^2 = 2\left(\frac{b^2 + \Delta_F}{4a^2} \right) \ge 2.$$
	It is easy to see that 
	$\| ( f +1)\|^2 \ge \|f\|^2 \ge 2$ by \ref{eqleng}. 
	Hence we must have $g = \pm (f -1)$.

	Now let $\mu = \frac{\langle \Phi(f-1), \Phi(1) \rangle }{\|\Phi(f-1)\|^2}$. One has
	$$|\mu |= \left|\frac{\sigma(f) + \sigma'(f)-2}{\|f-1\|^2}\right|= \frac{2 a (2a-b)}{(2a-b)^2 + \Delta_F} \le \frac{1}{2}.$$
	The last inequality is obtained since $b^2 \ge 4 a^2-\Delta_F$. Thus, $\{ \Phi(f-1), \Phi(1)\} $ is an LLL-reduced basis for $I$.
\end{proof}



\begin{corollary}\label{corolquad}
	Let $F$ be a real quadratic field and let $I= \mathbb{Z} + \mathbb{Z} f$ be a reduced fractional ideal of $F$ where $f $ is in Remark \ref{remrealquad}.
	Then $I$ is not 1-reduced if and only if $N(f-1/2) > -3/4$.
\end{corollary}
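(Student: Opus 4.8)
The plan is to derive Corollary \ref{corolquad} from Theorem \ref{thmrealquad} together with Proposition \ref{prophigher}. Theorem \ref{thmrealquad} already gives one implication: if $N(f-1/2) \le -3/4$ then $I$ is 1-reduced. So the only thing that remains is the converse, namely that $N(f-1/2) > -3/4$ forces $I$ \emph{not} to be 1-reduced; equivalently (taking contrapositives in both directions) the corollary is exactly the conjunction of Theorem \ref{thmrealquad} and this converse.

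For the converse, I would translate the hypothesis $N(f-1/2) > -3/4$ into the algebraic inequality on $(a,b)$. As recorded at the start of the proof of Theorem \ref{thmrealquad}, the condition $N(f-1/2) \le -3/4$ is equivalent to $(b-a)^2 + 3a^2 \le \Delta_F$; hence $N(f-1/2) > -3/4$ is equivalent to $(b-a)^2 + 3a^2 > \Delta_F$, which in particular gives $3a^2 > \Delta_F - (b-a)^2$. The cleanest route is to observe that $N(I^{-1}) = a$ by Remark \ref{remrealquad}, and that when $N(f-1/2) > -3/4$ one has $3a^2 > \Delta_F$ in the relevant range, so that $a > \sqrt{\Delta_F/3}$, i.e. $N(I^{-1}) > \sqrt{|\Delta_F|/3} = \sqrt{|\Delta_F|/c_2}$ (using $c_2 = 3$ from the table after Proposition \ref{prophigher}). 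Then Proposition \ref{prophigher} applied with $n=2$ immediately yields that $I$ is not 1-reduced.

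The one subtlety to check carefully is the step $(b-a)^2 + 3a^2 > \Delta_F \Rightarrow 3a^2 > \Delta_F$, which is not literally true for all $(a,b)$ — it needs the constraint $|\sqrt{\Delta_F} - 2a| < b < \sqrt{\Delta_F}$ from \eqref{eqabc} in Remark \ref{remrealquad}. So the main (minor) obstacle is a short case analysis: one must confirm that within the admissible region for $(a,b)$, the failure of $(b-a)^2 + 3a^2 \le \Delta_F$ does force $a > \sqrt{\Delta_F/3}$, perhaps by checking that if $a \le \sqrt{\Delta_F/3}$ then the bounds on $b$ in \eqref{eqabc} keep $(b-a)^2 \le \Delta_F - 3a^2$. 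Concretely, when $a \le \sqrt{\Delta_F/3}$ one shows $a - \sqrt{\Delta_F - 3a^2} \le b \le a + \sqrt{\Delta_F - 3a^2}$ is compatible with (indeed implied by) the interval $|\sqrt{\Delta_F}-2a| < b < \sqrt{\Delta_F}$, so that $N(f-1/2) \le -3/4$ automatically — contrapositive of what we want.

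Once that region check is in hand, the corollary follows in two lines: the forward direction is Theorem \ref{thmrealquad}, and the reverse direction chains the equivalence $N(f-1/2) > -3/4 \Leftrightarrow a > \sqrt{|\Delta_F|/3}$ (valid on the admissible region) with Proposition \ref{prophigher} for $n = 2$. I would present it as a short paragraph rather than a displayed computation, since no new estimates beyond those already assembled for Theorem \ref{thmrealquad} are needed.
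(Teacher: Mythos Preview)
Your plan has a genuine gap in the ``reverse'' direction. You want to deduce
\[
N(f-1/2) > -\tfrac34 \ \Longrightarrow\ a=N(I^{-1}) > \sqrt{\Delta_F/3}
\]
on the admissible region \eqref{eqabc}, and then invoke Proposition~\ref{prophigher} with $n=2$. But that implication is \emph{false}. Your containment check ``$|\sqrt{\Delta_F}-2a|<b<\sqrt{\Delta_F}$ implies $|b-a|\le\sqrt{\Delta_F-3a^2}$'' only goes through when $a\le\sqrt{\Delta_F}/2$: indeed $\sqrt{\Delta_F}\le a+\sqrt{\Delta_F-3a^2}$ is equivalent, after squaring, to $4a^2\le 2a\sqrt{\Delta_F}$, i.e.\ $2a\le\sqrt{\Delta_F}$. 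In the range $\sqrt{\Delta_F/4}<a\le\sqrt{\Delta_F/3}$ the upper endpoint $\sqrt{\Delta_F}$ strictly exceeds $a+\sqrt{\Delta_F-3a^2}$, so there is room for admissible $b$ with $(b-a)^2+3a^2>\Delta_F$ while $3a^2\le\Delta_F$. A concrete instance: $\Delta_F=305$ (fundamental, since $305=5\cdot 61\equiv 1\pmod 4$), $a=10$, $b=5$, $c=-7$. Then $|\sqrt{305}-20|\approx 2.54<5<\sqrt{305}\approx 17.46$, so the ideal is reduced; $(b-a)^2+3a^2=325>305$, so $N(f-1/2)>-3/4$; yet $a=10<\sqrt{305/3}\approx 10.08$, so Proposition~\ref{prophigher} says nothing. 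Hence Proposition~\ref{prophigher} is strictly weaker than the forward implication of the corollary and cannot replace it.

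The paper does not attempt this route. Its proof simply combines Theorem~\ref{thmrealquad} (for the direction $N(f-1/2)\le -3/4\Rightarrow$ 1\nobreakdash-reduced) with an external input, Example~9.5 of \cite{ref:4}, which already establishes that $N(f-1/2)>-3/4$ forces $I$ not to be 1\nobreakdash-reduced. If you want a self-contained argument for that direction, you would need to exhibit, whenever $(b-a)^2+3a^2>\Delta_F$, an explicit element of $I$ shorter than $1$ for \emph{every} metric $u$ with $N(u)=1$ (this is what Schoof's example effectively does), rather than appealing to the covolume bound of Proposition~\ref{prophigher}.
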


\begin{proof}
	It was shown by Example 9.5 in \cite{ref:4} that if $N(f-1/2) > -3/4$, then $I$ is not 1-reduced. Hence, this result is implied from Theorem \ref{thmrealquad}.
\end{proof}

\begin{corollary}\label{corolquad2}
	Let $F$ be a real quadratic field and let $I$ be a fractional ideal of $F$. If $N(I^{-1}) > \sqrt{\Delta_F/3}$ then $I$ is not 1-reduced.
\end{corollary}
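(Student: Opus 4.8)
The plan is to prove the contrapositive: assuming that $I$ is $1$-reduced, I will deduce $N(I^{-1}) \le \sqrt{\Delta_F/3}$. The first step is to observe that a $1$-reduced ideal is in particular reduced and contains $1$ as a primitive element. Indeed, if $u \in (\mathbb{R}_{>0})^2$ realizes $1$ as a shortest vector of the lattice $I$ for $\|\cdot\|_u$, and some $g \in I\setminus\{0\}$ had $|\sigma(g)|<1$ and $|\sigma'(g)|<1$, then $\|g\|_u^2 = u_1^2[\sigma(g)]^2 + u_2^2[\sigma'(g)]^2 < u_1^2+u_2^2 = \|1\|_u^2$, contradicting minimality of $1$; together with primitivity of $1$ from Definition \ref{def:1}, this shows $I$ is reduced. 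Hence Remark \ref{remrealquad} applies: we may write $I = \mathbb{Z} + f\mathbb{Z}$ with $f = \frac{b+\sqrt{\Delta_F}}{2a}$ as in \eqref{eqabc}, and $N(I^{-1}) = a$.

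The second step invokes Corollary \ref{corolquad}. Since $I$ is $1$-reduced, it is \emph{not} the case that $N(f-1/2) > -3/4$; that is, $N(f-1/2) \le -3/4$. As recorded at the beginning of the proof of Theorem \ref{thmrealquad}, this inequality is equivalent to $(b-a)^2 + 3a^2 \le \Delta_F$, which in turn forces $3a^2 \le \Delta_F$. Therefore $a \le \sqrt{\Delta_F/3}$, i.e. $N(I^{-1}) \le \sqrt{\Delta_F/3}$, which is the desired contrapositive.

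I would also point out that the statement is exactly Proposition \ref{prophigher} specialized to $n=2$: since $\gamma_2 = 2/\sqrt{3}$, one has $c_2 = (2/\gamma_2)^2 = 3$, and Proposition \ref{prophigher} then says $I$ is not $1$-reduced when $N(I^{-1}) > \sqrt{\Delta_F/3}$. This gives an alternative one-line proof independent of Section \ref{sec3}; I would present the argument via Corollary \ref{corolquad} to keep the real-quadratic section self-contained, and merely remark on the coincidence with Proposition \ref{prophigher}.

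There is no substantial obstacle here. The only point that needs a little care is the reduction from "$I$ is $1$-reduced" to "$I$ has the normal form of Remark \ref{remrealquad} with $N(I^{-1}) = a$", which rests on the (easy) observation above that $1$-reduced implies reduced; the remainder is the short algebraic equivalence $N(f-1/2)\le -3/4 \iff (b-a)^2+3a^2\le \Delta_F \implies a^2\le \Delta_F/3$, already carried out in the proof of the theorem.
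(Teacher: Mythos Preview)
Your argument is correct and follows essentially the same route as the paper: the paper's proof simply cites inequality \eqref{eqconditionb}, i.e., $(b-a)^2+3a^2\le \Delta_F$, from which $3a^2\le\Delta_F$ and hence $N(I^{-1})=a\le\sqrt{\Delta_F/3}$. Your write-up is more careful in one respect---you explicitly justify why a $1$-reduced ideal is reduced, so that Remark~\ref{remrealquad} and Corollary~\ref{corolquad} apply---and your observation that this is exactly Proposition~\ref{prophigher} for $n=2$ matches the remark the paper makes after the table in Section~\ref{sec4}.
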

\begin{proof}
	This can be easily seen by the inequality \ref{eqconditionb}.
\end{proof}

\section{Conclusion and Open Problems}
Determining when a reduced ideal is 1-reduced can be solved for quadratic fields since their ideals are explicitly and nicely described (see Remark \ref{remrealquad} and \cite{Tran1}). However, there is no such a description for ideals of higher degree number fields. Hence, this will be a challenge for us to work in the future.

In addition, finding properties, the cardinality and the distribution (in the topological group $\Pic^0_F$) of the set of reduced but not 1-reduced ideals of an arbitrary number field is an open problem for further research.

\section*{Acknowledgement}  
The author is financially supported by the Pacific Institute for the Mathematical Sciences (PIMS).


\end{document}